\newtheorem{theorem}{Theorem}
\newtheorem{definition}[theorem]{Definition}
\newtheorem{question}[theorem]{Question}
\begin{document}

\title[Algebraic and topological aspects of $ST_n$ and its representations]{Algebraic and topological aspects of the singular twin group and its representations}

\author{Mohamad N. Nasser}
\address{Mohamad N. Nasser\\
         Department of Mathematics and Computer Science\\
         Beirut Arab University\\
         P.O. Box 11-5020, Beirut, Lebanon}         
\email{m.nasser@bau.edu.lb}

\author{Nafaa Chbili}
\address{Nafaa Chbili\\
Department of Mathematical Sciences\\
College of Science\\
United Arab Emirates University \\
15551 Al Ain, U.A.E.}
\email{nafaachbili@uaeu.ac.ae}

\begin{abstract}
In this article, we introduce the singular twin monoid and its corresponding group, constructed from both algebraic and topological perspectives. We then classify all complex homogeneous $2$-local representations of this constructed group. Moreover, we study the irreducibility of these representations and provide clear conditions under which irreducibility holds. Our results give a structured approach to understanding this new algebraic object and its representations.
\end{abstract}

\maketitle

\renewcommand{\thefootnote}{}

\footnote{\textit{Key words and phrases.} Braid Group, Twin Group, Singular Braid Group, Singular Twin Group, Local Representations, Irreducibility.}

\footnote{\textit{Mathematics Subject Classification.} 20F36, 57K10.}

\vspace*{-0.4cm}

\section{Introduction} 

The braid group on $n$ strings, denoted by $B_n$, was first presented by E. Artin in 1926 \cite{Artin1}. One way to picture $B_n$ is as $n$ parallel strings hanging in three-dimensional space, which may twist around one another as they descend but never cross. The group $B_n$ is generated by $n-1$ elements denoted by $\sigma_1, \sigma_2, \ldots, \sigma_{n-1}$, where each $\sigma_i$ represents the crossing of the $i$-th strand over the $(i+1)$-st strand, see Figure \ref{figure1}. These generators satisfy the classical braid relations, which encode the notion that different ways of performing local crossings can lead to equivalent overall braids. Beyond its algebraic structure, $B_n$ plays a central role in low-dimensional topology, since Alexander’s theorem shows that every knot can be expressed as the closure of a braid \cite{Alexander1923}.  

\vspace*{0.2cm}

A natural counterpart to the braid group, first presented by G. Shabat and V. Voevodsky, is the twin group on $n$ strings, denoted by $T_n$ \cite{Shabat1990}. It can be viewed as a flattened version of $B_n$, where the distinction between over and under crossings is ignored. The generators $s_1, s_2, \ldots, s_{n-1}$ of $T_n$ still describe swaps between neighboring strands, but without recording which strand passes on top, reflecting the two-dimensional nature of the group, see Figure \ref{figure2}. Thus, $T_n$ preserves the essence of braiding while fitting into a simpler algebraic framework closely related to Coxeter groups. Furthermore, joining the ends of a twin produces a doodle, connecting twin groups to the study of planar curves without self-intersections \cite{FT}.

\vspace*{0.2cm}

Both groups, $B_n$ and $T_n$, are deeply linked to topology in different ways. The braid group $B_n$ is well known to be isomorphic to the fundamental group of the configuration space of $n$ distinct points in the plane, which explains its rich connections to both algebraic and geometric topology. This interpretation underlies its role in knot theory, where the passage from braids to knots via closures provides a powerful bridge between group theory and low-dimensional topology. On the other hand, the twin group $T_n$ corresponds to a more restricted configuration space in which over/under distinctions have been erased. Its uses are mainly combinatorial, since $T_n$ gives a simplified model of braiding. It captures some of the planar features but loses part of the topological information that $B_n$ still preserves.   
 
\vspace*{0.2cm}

One of the important algebraic structures that extends $B_n$ is the singular braid monoid on $n$ strings, denoted by $SM_n$, which was presented first by J. Birman in \cite{J.Bir}. The monoid $SM_n$ is generated by the Artin generators $\sigma_1, \sigma_2, \ldots, \sigma_{n-1}$ of $B_n$ together with an additional family of singular generators denoted by $\tau_1, \tau_2, \ldots, \tau_{n-1}$. In \cite{R.Fe}, R. Fenn, E. Keyman, and C. Rourke showed that $SM_n$ embeds into a group, namely $SB_n$, called the singular braid group, which has the same generators and relations as $SM_n$. Both the monoid $SM_n$ and the group $SB_n$ enrich the study of braids by taking singularities into consideration, making them essential tools in several areas of mathematics. For more information on $SM_n$ and $SB_n$, see \cite{Das,B.Gem}.

\vspace*{0.2cm}

Group representations and their properties let us study the structure of a group from both algebraic and geometric perspectives. In this setting, abstract group elements are realized as linear transformations, which makes their structure more concrete. One of the important properties to be studied for a representation is its irreducibility. A representation is said to be irreducible if it has no nontrivial subrepresentations, and otherwise it is called reducible. Irreducible representations are particularly significant since they serve as the basic building blocks of representation theory. More deeply, constructing such representations is fundamental in various fields, including quantum mechanics and particle physics \cite{J.Yang}.

\vspace*{0.2cm}

One famous type of representation is the $k$-local representation which was presented in \cite{Nas20241}. For a group $G$ with generators $g_1, g_2, \dots, g_{n-1}$, a representation of $G$ into $\mathrm{GL}_n(\mathbb{Z}[t^{\pm 1}])$ is said to be $k$-local if each generator $g_i$ is mapped to a block matrix of the form
\[
\begin{pmatrix}
I_{i-1} & 0 & 0 \\
0 & M_i & 0 \\
0 & 0 & I_{n-i-1}
\end{pmatrix},
\] 
where $M_i \in \mathrm{GL}_k(\mathbb{Z}[t^{\pm 1}])$ and $I_r$ is the $r \times r$ identity matrix. The $k$-local representations of the braid group, the singular braid monoid, and the twin group have been classified and studied for $k=2$ and $k=3$ \cite{Mik2013,Mayassi2025,Mayasi20251,M.Nass.3twin}.  

\vspace*{0.2cm}

The main objective of this paper is to construct and study a new group that extends $T_n$, in complete analogy with the relationship between the singular braid group $SB_n$ and the braid group $B_n$. This group, which is called the singular twin group and denoted by $ST_n$, is introduced and developed from both algebraic and topological perspectives (Section~3). The construction provides a natural framework that captures additional singular structures while preserving the fundamental properties of $T_n$. Once the group is established, we proceed to classify all complex homogeneous $2$-local representations of $ST_n$ (Section 4). In order to gain a deeper understanding of the representation of this group, we further examine the irreducibility of these representations and determine precise conditions under which they become irreducible (Section 5). Lastly, we give some open topics to be studied as future work (Section 6).

\vspace*{0.1cm}

\section{Main definitions and previous results} 

In this section, we present the main group and monoid structures and presentations relevant to our study. We begin with the braid group $B_n$ and its normal subgroup, the pure braid group $P_n$.

\begin{definition} \cite{Artin1, Artin2}
The braid group on $n$ strands, denoted by $B_n$, is a discrete group generated by $\sigma_1,\sigma_2,\ldots,\sigma_{n-1}$ that satisfy the following relations.
\begin{equation} \label{eqs1}
\ \ \ \ \sigma_i\sigma_{i+1}\sigma_i = \sigma_{i+1}\sigma_i\sigma_{i+1} ,\hspace{0.45cm} i=1,2,\ldots,n-2,
\end{equation}
\begin{equation} \label{eqs2}
\sigma_i\sigma_j = \sigma_j\sigma_i , \hspace{1.45cm} |i-j|\geq 2.
\end{equation} 
\end{definition}

\vspace*{0.1cm}
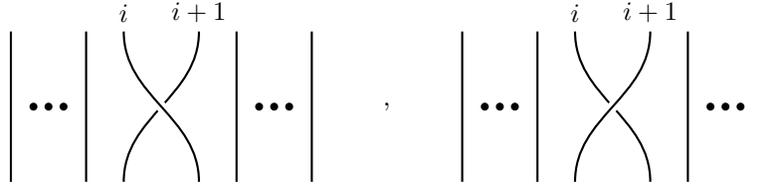
\begin{figure}[h!]
\begin{center}
\begin{tikzpicture}
 	\draw[thick] (-1.5,0)--(-1.5,2); 
    \fill (-1,1) circle(1.5pt) (-1.2,1) circle(1.5pt)(-0.8,1)circle(1.5pt);       
    \draw[thick] (-.5,0)--(-.5,2);
    \draw[thick] (2.5,0)--(2.5,2); 
    \fill (2,1) circle(1.5pt) (2.2,1) circle(1.5pt)
    (1.8,1)circle(1.5pt);       
    \draw[thick] (1.5,0)--(1.5,2);
    \draw[thick] (0,0) to[out=90, in=-90] (1,2);
    \draw[thick, white, line width=4pt] (1,0) to[out=90, in=-90] (0,2); 
    \draw[thick] (1,0) to[out=90, in=-90] (0,2);
    \node[above] at(0,2){$i$};
    \node[above] at(1,2){$i+1$};
	\node at (3.5,1){,};  
 	\draw[thick] (5.5,0)--(5.5,2); 
    \fill (5,1) circle(1.5pt) (4.8,1) circle(1.5pt)(5.2,1)circle(1.5pt);       
    \draw[thick] (4.5,0)--(4.5,2);
    \draw[thick] (8.5,0)--(8.5,2); 
    \fill (8,1) circle(1.5pt) (8.2,1) circle(1.5pt)(7.8,1)circle(1.5pt);       
    \draw[thick] (7.5,0)--(7.5,2);    
    \draw[thick] (7,0) to[out=90, in=-90] (6,2);
    \draw[thick, white, line width=4pt] (6,0) to[out=90, in=-90] (7,2); 
    \draw[thick] (6,0) to[out=90, in=-90] (7,2);
    \node[above] at(6,2){$i$};
    \node[above] at (7,2) {$i+1$};
\end{tikzpicture}
\caption{The generators \(\sigma_i\) and \(\sigma_i^{-1}\).}
\label{figure1}
\end{center}
\end{figure}
\vspace*{0.1cm}

\begin{definition} \cite{Artin1, Artin2}
The pure braid group on $n$ strands, denoted by $P_n$, is defined as the kernel of the homomorphism $B_n \to S_n$ defined by $\sigma_i \mapsto (i \hspace{0.2cm} i+1)$, $1\leq i \leq n-1$, where $S_n$ is the symmetric group of $n$ elements. It admits a presentation with the following generators.
$$A_{ij}=\sigma_{j-1}\sigma_{j-2} \ldots \sigma_{i+1}\sigma^2_{i}\sigma^{-1}_{i+1} \ldots \sigma^{-1}_{j-2}\sigma^{-1}_{j-1}, \hspace{1cm} 1\leq i<j\leq n.$$
\end{definition}

\vspace*{0.1cm}

Next, we present the twin group $T_n$ along with its normal subgroup, the pure twin group $PT_n$.

\begin{definition}\cite{Shabat1990}
The twin group on $n$ strands, denoted by $T_n$, is a discrete group generated by $s_1,s_2,\ldots,s_{n-1}$ that satisfy the following relations.
\begin{equation} \label{eqs3}
\hspace{1.46cm} s_i^2=1,\hspace{0.95cm} i=1,2,\ldots,n-2,
\end{equation}
\begin{equation} \label{eqs4}
s_is_j=s_js_i, \hspace{0.55cm} |i-j|\geq 2.
\end{equation}
\end{definition}

\vspace*{0.1cm}
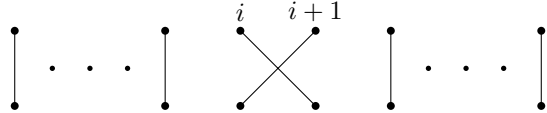
\begin{figure}[h!]
\begin{center}
\begin{tikzpicture}
\draw (0,0)--(0,1)(2,0)--(2,1)(5,0)--(5,1)(3,0)--(4,1)(4,0)--(3,1)(7,1)--(7,0);
\foreach \n in {0,2,3,4,5,7} \fill (\n,1)circle(1.5pt);
\foreach \n in {0,2,3,4,5,7} \fill (\n,0)circle(1.5pt);
\foreach \n in {.5,1,1.5,5.5,6,6.5} \fill (\n,0.5)circle(1pt);
\node[above] at (3,1) {$i$};
\node[above] at (4,1) {$i+1$};
\end{tikzpicture}
\end{center}
\caption{The generator $s_i$.}
\label{figure2}
\end{figure}
\vspace*{0.1cm}

\begin{definition} \cite{Khovanov1996}
The pure twin group on $n$ strands, denoted by $PT_n$, is defined as the kernel of the homomorphism $T_n \to S_n$ defined by $s_i \mapsto (i \hspace{0.2cm} i+1)$, $1\leq i \leq n-1$, where $S_n$ is the symmetric group of $n$ elements.
\end{definition}

\noindent In \cite{Bardakov2019}, Bardakov et. al found a generating set of $PT_n$ for $n>2$ using the Reidemeister–Schreier method explained in \cite{Mag}.

\vspace*{0.15cm}

We now move through two significant extensions of the braid group $B_n$: the singular braid monoid $SM_n$ and the singular braid group $SB_n$. Also, we introduce the singular pure braid group $SP_n$, which is a normal subgroup of $SB_n$.

\begin{definition}
\cite{J.Bir} The singular braid monoid, denoted by $SM_n$, is the monoid generated by the generators $\sigma_1^{\pm 1},\sigma_2^{\pm 1}, \ldots,\sigma_{n-1}^{\pm 1}$ of $B_n$ and the singular generators $\tau_1,\tau_2, \ldots, \tau_{n-1}$. The generators of \(SM_n\) satisfy the relations (\ref{eqs1}) and (\ref{eqs2}) of $B_n$ in addition to the following relations.
\begin{equation} \label{eqs8}
\tau_i\tau_j=\tau_j\tau_i ,\hspace{1.55cm} |i-j|\geq 2,
\end{equation}
\begin{equation} \label{eqs9}
\tau_i\sigma_j=\sigma_j\tau_i ,\hspace{1.5cm} |i-j|\geq 2,
\end{equation}
\begin{equation} \label{eqs10}
\hspace{1.15cm} \tau_i\sigma_i=\sigma_i\tau_i ,\hspace{1.55cm} i=1,2,\ldots ,n-1,
\end{equation}
\begin{equation} \label{eqs11}
\ \ \ \ \sigma_i\sigma_{i+1}\tau_i=\tau_{i+1}\sigma_i\sigma_{i+1}, \hspace{0.5cm} i=1,2,\ldots,n-2,
\end{equation}
\begin{equation} \label{eqs12}
\ \ \ \ \tau_{i}\sigma_{i+1}\sigma_{i}=\sigma_{i+1}\sigma_{i}\tau_{i+1}, \hspace{0.5cm} i=1,2,\ldots,n-2.
\end{equation}    
\end{definition}

\noindent By adjoining the inverses of the generators $\tau_i,\ 1\leq i \leq n-1$, we obtain an extension of $B_n$, generated by $\sigma_1,\sigma_2,\ldots,\sigma_{n-1}$ and $\tau_1,\tau_2,\ldots,\tau_{n-1}$, called the singular braid group and denoted by $SB_n$.

\vspace*{0.1cm}
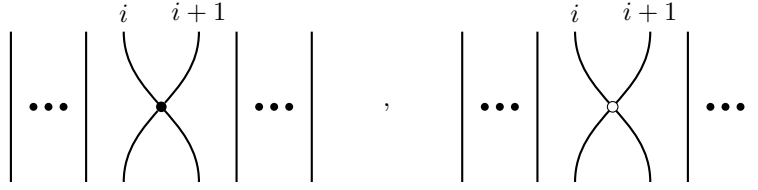
\begin{figure}[h!]
\begin{center}
\begin{tikzpicture}
	\draw[thick] (5.5,0)--(5.5,2); 
    \fill (5,1) circle(1.5pt) (4.8,1) circle(1.5pt)(5.2,1)circle(1.5pt);       
    \draw[thick] (4.5,0)--(4.5,2);
    \draw[thick] (8.5,0)--(8.5,2); 
    \fill (8,1) circle(1.5pt) (8.2,1) circle(1.5pt)(7.8,1)circle(1.5pt);       
    \draw[thick] (7.5,0)--(7.5,2);    
	\draw[thick] (7,0) to[out=90, in=-90] (6,2);
    \draw[thick, white, line width=4pt] (6,0) to[out=90, in=-90] (7,2); 
    \draw[thick] (6,0) to[out=90, in=-90] (7,2);
    \draw[draw=black,fill=white] (6.5,1) circle (2pt); 
    \node[above] at(6,2){$i$};
    \node[above] at (7,2) {$i+1$};
	\node at (3.5,1){,};    
 	\draw[thick] (-1.5,0)--(-1.5,2); 
    \fill (-1,1) circle(1.5pt) (-1.2,1) circle(1.5pt)(-0.8,1)circle(1.5pt);       
    \draw[thick] (-.5,0)--(-.5,2);
    \draw[thick] (2.5,0)--(2.5,2); 
    \fill (2,1) circle(1.5pt) (2.2,1) circle(1.5pt)(1.8,1)circle(1.5pt);       
    \draw[thick] (1.5,0)--(1.5,2);
    \draw[thick] (0,0) to[out=90, in=-90] (1,2);
    \draw[thick, white, line width=4pt] (1,0) to[out=90, in=-90] (0,2); 
    \draw[thick] (1,0) to[out=90, in=-90] (0,2);
    \fill[black] (0.5,1) circle (2pt); 
    \node[above] at(0,2){$i$};
    \node[above] at(1,2){$i+1$};
\end{tikzpicture}
\caption{The generators \(\tau_i\) and \(\tau_i^{-1}\).}
\label{figure3}
\end{center}
\end{figure}
\vspace*{0.1cm}

\begin{definition} \cite{Das}
The singular pure braid group on $n$ strands, denoted by $SP_n$, is defined as the kernel of the homomorphism $SB_n \to S_n$ defined by $\sigma_i \mapsto (i \hspace{0.2cm} i+1)$ and $\tau_i \mapsto (i \hspace{0.2cm} i+1)$, $1\leq i \leq n-1$, where $S_n$ is the symmetric group of $n$ elements.
\end{definition}

\noindent Similarly to the case of the pure twin group, Bardakov et al. \cite{Bar2025} obtained a generating set of $SP_n$ for $n>2$ by applying the Reidemeister–Schreier method described in \cite{Mag}.

\vspace*{0.15cm}

In the following, we introduce the idea of $k$-local representations for a group $G$ with finitely many generators.

\vspace*{0.1cm}

\begin{definition}\cite{Nas20241}
Let $G$ be a group generated by $g_1,g_2,\ldots,g_{n-1}$. A representation $\theta: G \to \mathrm{GL}_{m}(\mathbb{C})$ is called $k$-local if it takes the form
$$\theta(g_i) =
\begin{pmatrix}
I_{i-1} & 0 & 0 \\
0 & M_i & 0 \\
0 & 0 & I_{n-i-1}
\end{pmatrix}
\hspace*{0.2cm} \text{for} \hspace*{0.2cm} 1\leq i\leq n-1,$$ 
where $M_i \in \mathrm{GL}_k(\mathbb{C})$ with $k = m-n+2$, and $I_r$ denotes the $r \times r$ identity matrix. The representation is called homogeneous if all the matrices $M_i$ coincide.
\end{definition}

\vspace*{0.1cm}
 
In recent years, research on $k$-local representations has made steady progress. Mikhalchishina first classified the $2$-local representations of $B_3$ and all complex homogeneous $2$-local representations of $B_n$ for $n \geq 3$ \cite{Mik2013}. Later, Mayassi and Nasser studied the complex homogeneous $3$-local representations of $B_n$ for $n \geq 4$ \cite{Mayassi2025}. The following are two famous examples of $k$-local representations of the braid group $B_n$ with different degrees $k$.
 
\begin{definition} \cite{1} \label{defBurau}
Let $t$ be indeterminate. The Burau representation $\rho_B(t): B_n\to \mathrm{GL}_n(\mathbb{Z}[t^{\pm 1}])$ is the representation given by
$$\sigma_i\mapsto \left( \begin{array}{c|@{}c|c@{}}
   \begin{matrix}
     I_{i-1} 
   \end{matrix} 
      & 0 & 0 \\
      \hline
    0 &\hspace{0.2cm} \begin{matrix}
   	1-t & t\\
   	1 & 0\\
\end{matrix}  & 0  \\
\hline
0 & 0 & I_{n-i-1}
\end{array} \right) \hspace*{0.2cm} \text{for} \hspace*{0.2cm} 1\leq i\leq n-1.$$ 
\end{definition}

\vspace*{0.1cm}

\begin{definition} \cite{19} \label{Fdef}
Let $t$ be indeterminate. The $F$-representation $\rho_F(t): B_n \to \mathrm{GL}_{n+1}(\mathbb{Z}[t^{\pm 1}])$ is the representation given by
$$\sigma_i\mapsto \left( \begin{array}{c|@{}c|c@{}}
   \begin{matrix}
     I_{i-1} 
   \end{matrix} 
      & 0 & 0 \\
      \hline
    0 &\hspace{0.2cm} \begin{matrix}
   		1 & 1 & 0 \\
   		0 &  -t & 0 \\   		
   		0 &  t & 1 \\
   		\end{matrix}  & 0  \\
\hline
0 & 0 & I_{n-i-1}
\end{array} \right) \hspace*{0.2cm} \text{for} \hspace*{0.2cm} 1\leq i\leq n-1.$$ 
\end{definition}

\vspace*{0.1cm}

Regarding $k$-local representations of the twin groups, T. Mayassi and M. Nasser classified all $2$-local representations of $T_n$ for all $n\geq 2$ \cite{Mayasi20251}. Moreover, M. Nasser determined all $3$-local representations of the twin group $T_n$, the virtual twin group $VT_n$, and the welded twin group $WT_n$, for all $n\geq 4$ \cite{M.Nass.3twin}. On the other hand, in \cite{M.N.twin}, M. Nasser presented two particular representations of the twin group $T_n$ for $n\geq 2$. These representations are referred to as $N_1$ and $N_2$, respectively, and are described explicitly with their main results in the following.

\vspace*{0.1cm}
 
\begin{definition} \cite{M.N.twin}
The $N_1$-representation $\eta_1: T_n \rightarrow \mathrm{GL}_n(\mathbb{Z}[t^{\pm 1}])$, where $t$ is indeterminate, is the representation defined by
$$s_i\mapsto \left( \begin{array}{c|@{}c|c@{}}
   \begin{matrix}
     I_{i-1} 
   \end{matrix} 
      & 0 & 0 \\
      \hline
    0 &\hspace{0.2cm} \begin{matrix}
   	1-t & t\\
   	2-t & t-1\\
\end{matrix}  & 0  \\
\hline
0 & 0 & I_{n-i-1}
\end{array} \right) \text{ for } 1\leq i \leq n-1.$$ 
\end{definition}

\vspace*{0.1cm}

\begin{theorem} \cite{M.N.twin}
The representation $\eta_1: T_n \rightarrow \text{GL}_n(\mathbb{Z}[t^{\pm 1}])$ is reducible to the degree $n-1$ for all $n\geq 3$. Moreover, the complex specialization of its $(n-1)$-composition factor, namely $\eta_1': T_n \rightarrow \text{GL}_{n-1}(\mathbb{C})$, is irreducible if and only if $t\neq \frac{2n-2}{n-2}$ and $t\neq 2$.
\end{theorem}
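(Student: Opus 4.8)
The plan is to first establish reducibility by producing an explicit invariant line, then to determine irreducibility of the resulting $(n-1)$-dimensional factor by exploiting the fact that each local generator differs from the identity by a rank-one map.

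I would begin by checking that the block $M=\begin{pmatrix}1-t & t\\ 2-t & t-1\end{pmatrix}$ satisfies $M^2=I_2$ (so $\eta_1$ respects $s_i^2=1$) and that $M\,(1,1)^{\top}=(1,1)^{\top}$. The second identity shows the all-ones vector $e=(1,\dots,1)^{\top}$ is fixed by every $\eta_1(s_i)$, so $\langle e\rangle$ is a trivial subrepresentation and $\eta_1$ is reducible. The degree $n-1$ factor $\eta_1'$ is then the quotient $\mathbb{C}^n/\langle e\rangle$, which (generically) I would realize on the complementary invariant subspace $U=\langle u_1,\dots,u_{n-1}\rangle$ with $u_i=(2-t)e_{i+1}-t\,e_i$. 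A short computation records the matrices of $\eta_1'$ in this basis: $\eta_1'(s_i)$ fixes $u_j$ for $|i-j|\ge 2$, and sends $u_i\mapsto -u_i$, $u_{i-1}\mapsto u_{i-1}+(2-t)u_i$, $u_{i+1}\mapsto t\,u_i+u_{i+1}$. This makes the first assertion precise and gives a concrete model for $\eta_1'$.

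For irreducibility the key structural point is that $\eta_1'(s_i)-I$ has rank one with image $\langle u_i\rangle$: explicitly $(\eta_1'(s_i)-I)\big(\sum_k v_k u_k\big)=\phi_i(v)\,u_i$, where $\phi_i(v)=(2-t)v_{i-1}-2v_i+t\,v_{i+1}$ under the boundary convention $v_0=v_n=0$. Given a nonzero invariant subspace $W$, I would argue dichotomously. If some $v\in W$ has $\phi_i(v)\neq 0$ then $u_i\in W$, and the relations $\eta_1'(s_{i+1})u_i=u_i+(2-t)u_{i+1}$ and $\eta_1'(s_{i-1})u_i=u_i+t\,u_{i-1}$ propagate—provided $t\neq 2$ (and $t\neq 0$, automatic over $\mathbb{C}^{\times}$)—to force every $u_j\in W$, so $W=U$. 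Hence for $t\neq 2$ any proper invariant subspace must lie in the common kernel $\bigcap_i\ker\phi_i$, which is itself invariant (it is the fixed space of $\eta_1'$). Therefore, for $t\neq 2$, the factor $\eta_1'$ is reducible exactly when this common kernel is nonzero, i.e.\ when the $(n-1)\times(n-1)$ tridiagonal coefficient matrix $\Phi$ with diagonal $-2$, superdiagonal $t$, and subdiagonal $2-t$ is singular.

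The decisive step is then the evaluation of $\det\Phi$. Being tridiagonal, its determinant obeys a two-term linear recurrence in its size, and I would solve and factor the closed form to isolate the single nondegenerate value at which $\Phi$ drops rank; the content of the theorem is that this value is $t=\tfrac{2n-2}{n-2}$. The remaining value $t=2$ must be treated separately, since there the propagation argument is unavailable: at $t=2$ the block $M$ becomes upper triangular, $u_1$ collapses onto $e_1$ and $u_{n-1}$ onto $e_n$, creating extra invariant lines that make $\eta_1'$ reducible. Conversely, at $t=\tfrac{2n-2}{n-2}$ the nonzero kernel of $\Phi$ supplies a proper invariant subspace, giving the "only if'' direction. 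I expect the main obstacle to be precisely this determinant computation—carrying the recurrence out cleanly, factoring it to confirm that the \emph{only} admissible threshold is $\tfrac{2n-2}{n-2}$, and verifying that the degenerate case $t=2$ genuinely forces reducibility rather than merely escaping the propagation argument.
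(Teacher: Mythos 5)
First, a point of context: the paper you are working from does not actually prove this statement; it is quoted without proof from the reference \cite{M.N.twin}, so your attempt can only be judged on its own terms. Your structural work is correct, and I verified it independently: $M^2=I_2$, the all-ones vector $e$ is fixed, the subspace $U=\langle u_1,\dots,u_{n-1}\rangle$ with $u_i=(2-t)e_{i+1}-te_i$ is invariant with exactly the matrices you list, $\eta_1'(s_i)-I$ is rank one with image $\langle u_i\rangle$ and functional $\phi_i(v)=(2-t)v_{i-1}-2v_i+tv_{i+1}$, and your propagation dichotomy (valid once $t\neq 0,2$; note $t=0$ is excluded anyway since $t$ must be invertible in any complex specialization of $\mathbb{Z}[t^{\pm 1}]$) correctly reduces irreducibility of $\eta_1'$ to nonsingularity of the tridiagonal matrix $\Phi$.

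The genuine gap is precisely the step you deferred, and it is fatal: the determinant does not produce the claimed threshold. The recurrence $D_m=-2D_{m-1}-t(2-t)D_{m-2}$, $D_0=1$, $D_1=-2$, has characteristic roots $t-2$ and $-t$, giving
\[
\det\Phi=D_{n-1}=\frac{(t-2)^n-(-t)^n}{2(t-1)},
\]
which vanishes exactly when $\bigl(\tfrac{2-t}{t}\bigr)^n=1$ with $\tfrac{2-t}{t}\neq\pm 1$, i.e.\ at the points $t=\tfrac{2}{1+\zeta}$ with $\zeta^n=1$, $\zeta\neq\pm1$, all of which are non-real. In particular $\det\Phi$ never vanishes at the real value $t=\tfrac{2n-2}{n-2}$ (that would force $\zeta=\tfrac{-1}{n-1}$, which is not a root of unity). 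Concretely, for $n=3$ one gets $\det\Phi=t^2-2t+4$ with roots $1\pm i\sqrt 3$; at $t=4=\tfrac{2n-2}{n-2}$ the matrices $\eta_1'(s_1)=\begin{pmatrix}-1&4\\0&1\end{pmatrix}$ and $\eta_1'(s_2)=\begin{pmatrix}1&0\\-2&-1\end{pmatrix}$ share no eigenvector, so $\eta_1'$ is irreducible there, whereas at $t=1+i\sqrt3\notin\{2,4\}$ it is reducible (there the fixed space of $\eta_1'$ is nonzero, equivalently $e\in U$). So both directions of the stated equivalence fail for $\eta_1$ as defined in this paper: your (correct) method, carried to completion, refutes the statement rather than proving it. Either the quoting paper misstates the definition of $\eta_1$ or the criterion, or the cited source is itself in error; but no completion of your argument can yield the theorem as written. (A small separate slip: at $t=2$ one has $u_i=-2e_i$, so $u_{n-1}$ collapses onto $e_{n-1}$, not $e_n$; your conclusion that $t=2$ forces reducibility is nevertheless correct, e.g.\ $\langle u_1\rangle$ is then invariant.)
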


\vspace*{0.1cm}
 
\begin{definition}\cite{M.N.twin}
The $N_2$-representation $\eta_2: T_n \rightarrow \mathrm{GL}_n(\mathbb{Z}[t^{\pm 1}])$, where $t$ is indeterminate, is the representation defined by
$$s_i \mapsto \left( \begin{array}{c|@{}c|c@{}}
   \begin{matrix}
     I_{i-1} 
   \end{matrix} 
      & 0 & 0 \\
      \hline
    0 &\hspace{0.2cm} \begin{matrix}
   	0 & f(t)\\
   	f^{-1}(t) & 0\\
\end{matrix}  & 0  \\
\hline
0 & 0 & I_{n-i-1}
\end{array} \right) \text{ for } 1\leq i \leq n-1,$$
where $f(t)\in \mathbb{Z}[t^{\pm 1}]$ with $f(t)$ is invertible in $\mathbb{Z}[t^{\pm 1}]$ in and $f^{-1}(t)=\frac{1}{f(t)}$.
\end{definition}

\vspace*{0.1cm}

\section{Algebraic and topological interpretation of the singular twin} 

In parallel with the singular braid monoid and the singular braid group, we introduce in this section the singular twin monoid and the singular twin group. These constructions provide the twin group counterparts of their braid analogues, serving as a foundation for studying their algebraic and topological structure and representations.

\subsection{Algebraic interpretation} 

In this subsection, we introduce the presentation of the \emph{singular twin monoid} and its corresponding group from algebraic perspective, followed by the definition of the \emph{singular pure twin group}.

\begin{definition}
The singular twin monoid, denoted by $STM_n$, is the monoid generated by the generators $s_1, s_2, \ldots,s_{n-1}$ of $T_n$ and the singular generators $\tau_1,\tau_2, \ldots, \tau_{n-1}$. The generators of \(STM_n\) satisfy the relations (\ref{eqs3}) and (\ref{eqs4}) of $T_n$ in addition to the following relations.
\begin{equation} \label{eqs13}
\tau_i\tau_j=\tau_j\tau_i ,\hspace{1.5cm} |i-j|\geq 2,
\end{equation}
\begin{equation} \label{eqs14}
\tau_is_j=s_j\tau_i ,\hspace{1.5cm} |i-j|\geq 2,
\end{equation}
\begin{equation} \label{eqs15}
\hspace{1.15cm} \tau_is_i=s_i\tau_i ,\hspace{1.55cm} i=1,2,\ldots ,n-1,
\end{equation}
\begin{equation} \label{eqs16}
\ \ \ \ s_is_{i+1}\tau_i=\tau_{i+1}s_is_{i+1}, \hspace{0.5cm} i=1,2,\ldots,n-2,
\end{equation}
\begin{equation} \label{eqs17}
\ \ \ \ \tau_{i}s_{i+1}s_{i}=s_{i+1}s_{i}\tau_{i+1}, \hspace{0.5cm} i=1,2,\ldots,n-2.
\end{equation}    
\end{definition}

\noindent Remark that relations \eqref{eqs16} and \eqref{eqs17} are equivalent as $s_i^2=1$. By adjoining the inverses of the generators $\tau_i,\ 1\leq i \leq n-1$, we obtain an extension of $T_n$, generated by $s_1,s_2,\ldots,s_{n-1}$ and $\tau_1,\tau_2,\ldots,\tau_{n-1}$, which we call the \emph{singular twin group} and we denote it by $ST_n$.

\vspace*{0.1cm}

\begin{definition} \cite{Das}
The singular pure  twin group on $n$ strands, denoted by $SPT_n$, is defined as the kernel of the homomorphism $ST_n \to S_n$ defined by $s_i \mapsto (i \hspace{0.2cm} i+1)$ and $\tau_i \mapsto (i \hspace{0.2cm} i+1)$, $1\leq i \leq n-1$, where $S_n$ is the symmetric group of $n$ elements.
\end{definition}

\vspace*{0.1cm}

\begin{question}
Give a presentation of $SPT_n$ for $n>2$ by generators and relations.
\end{question}

\vspace*{0.1cm}

The Reidemeister--Schreier method provides a way to obtain a presentation of $SPT_n$ by generators and relations. However, the computation in our case  becomes quite involved because of the large number of defining relations in the group. 
Therefore, in what follows we restrict our attention to the case $n=3$ and show that $SPT_3$ is generated by the following four elements:
\[
a:=s_1\tau_1,\qquad b:=s_2\tau_2,\qquad c:=s_2 a s_2,\qquad d:=s_1 b s_1.
\]
Note that the quotient $ST_3/SPT_3\cong S_3$.  Choose the standard Schreier transversal $\Lambda$ consisting of reduced words representing each permutation:
\[
\Lambda=\{\,\lambda_0=e,\;\lambda_1=s_1,\;\lambda_2=s_2,\;\lambda_3=s_1s_2,\;
\lambda_4=s_2s_1,\;\lambda_5=s_1s_2s_1\,\}.
\]

\noindent For each coset representative $\lambda_i$ and each parent generator $x\in\{s_1,s_2,\tau_1,\tau_2\}$ the Schreier generator is
\[
a_{\lambda_i,x}:=\lambda_i x\bigl(\overline{\lambda_i x}\bigr)^{-1},
\]
where $\overline{\lambda_i x}\in\Lambda$ is the chosen representative with the same image in $S_3$ as $\lambda_i x$.  Because $\pi(\tau_j)=\pi(s_j)$ we always have $\overline{\lambda\,\tau_j}=\overline{\lambda\,s_j}$, which simplifies many computations.

The nontrivial Schreier generators (after cancelling obvious trivial ones) are computed as follows.
\[
\begin{aligned}
& s_2 s_1 s_2\bigl(\overline{s_2 s_1 s_2}\bigr)^{-1}
   = s_2 s_1 s_2 s_1 s_2 s_1
   = (s_2 s_1)^3, \\[4pt]
& s_1 s_2 s_1 s_2\bigl(\overline{s_1 s_2 s_1 s_2}\bigr)^{-1}
   = s_1 s_2 s_1 s_2 s_2 s_1
   = (s_1 s_2)^3, \\[4pt]
& s_1 \tau_1\bigl(\overline{s_1 \tau_1}\bigr)^{-1}
   = s_1 \tau_1 s_1^2
   = s_1 \tau_1 = a, \\[4pt]
& s_2 \tau_1\bigl(\overline{s_2 \tau_1}\bigr)^{-1}
   = s_2 \tau_1 (s_2 s_1)^{-1}
   = s_2 \tau_1 s_1 s_2
   = s_2 s_1 \tau_1 s_2=c, \\[4pt]
& s_1 s_2 \tau_1\bigl(\overline{s_1 s_2 \tau_1}\bigr)^{-1}
   = s_1 s_2 \tau_1 (s_1 s_2 s_1)^{-1}
   = s_1 s_2 \tau_1 s_1 s_2 s_1 \\[-2pt]
&\qquad \qquad \qquad \ \ \ \ \hspace*{0.04cm}
   = \tau_2 s_1 s_2 s_1 s_2 s_1
   = \tau_2 s_2 (s_2 s_1)^3
   = b(s_2 s_1)^3, \\[4pt]
& s_2 s_1 \tau_1\bigl(\overline{s_2 s_1 \tau_1}\bigr)^{-1}
   = s_2 s_1 \tau_1 (s_2)^{-1}
   = s_2 s_1 \tau_1 s_2=c, \\[4pt]
& s_1 s_2 s_1 \tau_1\bigl(\overline{s_1 s_2 s_1 \tau_1}\bigr)^{-1}
   = s_1 s_2 s_1 \tau_1 s_2 s_1
   = (s_1 s_2)^3 s_2 \tau_2=(s_1 s_2)^3 b.
\end{aligned}
\]

\noindent Similarly, the nontrivial Schreier generators arising from $\tau_2$ are (after cancellation and simplifying by the relations of $ST_3$)
\[
s_1 s_2 \tau_2 s_1=d,\qquad s_2 \tau_2=b,\qquad s_2 s_1 \tau_2 s_1 s_2 s_1 = a,\qquad s_1 s_2 s_1 \tau_2 s_1 s_2 = a.
\]

\noindent By elementary computation using the relations of the singular twin group $ST_3$ one checks the identities
\[
(s_2 s_1)^3=\bigl((s_1 s_2)^3\bigr)^{-1},\qquad
(s_1 s_2)^3=(s_1 s_2 \tau_2 s_1)(s_2 s_1 \tau_1 s_2)^{-1},
\]
so the Schreier generators above reduce to the following four  elements of $SPT_3$:
\[
a=s_1\tau_1,\qquad b=s_2\tau_2,\qquad c=s_2 a s_2,\qquad d=s_1 b s_1.
\]
Hence, $SPT_3$ is generated by $a,b,c,d$, as required.

\subsection{Topological interpretation} 

Recall that a fundamental theorem in knot theory states that any braid can be closed in a standard manner to yield a knot or a link in  the $3$-dimensional sphere $S^{3}$ \cite{Alexander1923}. Similarly, the closure of a singular braid yields a singular link, that is, a link which  can be represented by a planar diagram that is allowed to have a finite number of transverse double points called singularities.  
In the same spirit, an element of the twin group can be closed to define a doodle \cite{FT}.  
Formally, a doodle is an immersion of a finite disjoint union of circles into the $2$-dimensional sphere $S^{2}$, considered up to homotopy without creating triple points  \cite{Khovanov1997}.  
In other words, two doodles are regarded as equivalent if they can be related by a finite sequence of the following two local moves:  

\begin{itemize}
    \item \textbf{Move $D_{1}$}: Creation or elimination of a monogon (a small simple loop with no intersections).
    \item \textbf{Move $D_{2}$}: Creation or elimination of a bigon (two arcs forming a simple lens-shaped region).
\end{itemize}

\begin{figure}[h]
\centering
\includegraphics[width=5cm,height=1.8cm]{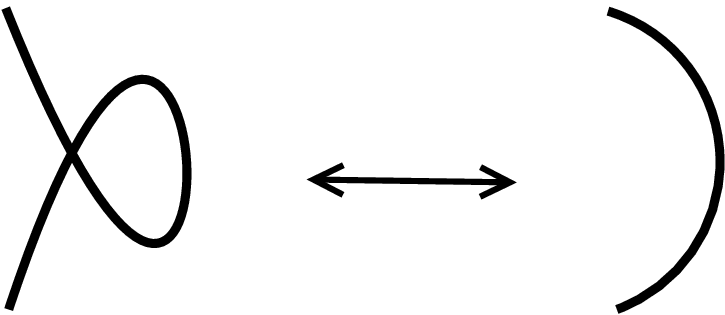}\hspace{1cm}
\includegraphics[width=5.5cm,height=1.8cm]{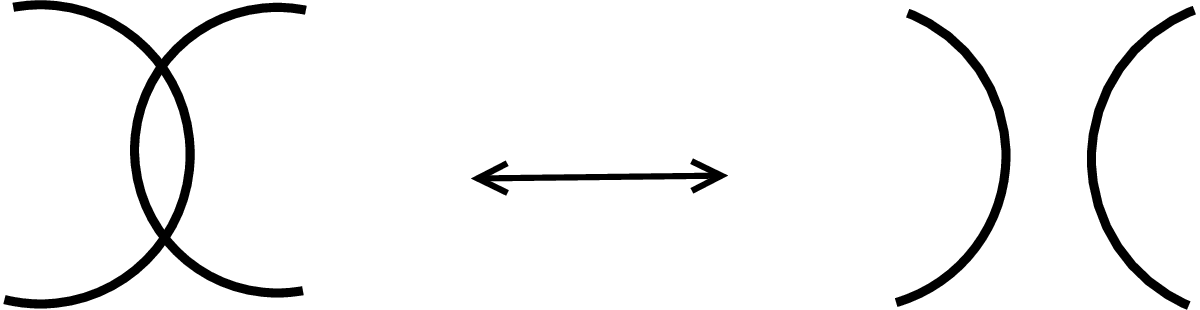}
\caption{The local moves $D_{1}$ (left) and $D_{2}$ (right).}
\label{figure4}
\end{figure}

These moves play the same role for doodles as the Reidemeister moves do for classical knots and links, providing a combinatorial description of their equivalence classes.

Likewise, a topological interpretation of the singular twin group can be formulated.  
Indeed, the closure of a singular twin element can be viewed as a \emph{singular doodle}: an immersion of a $4$-valent graph, possibly together with a collection of disjoint circles, into $S^{2}$, see Figure \ref{figure5}.  
Such singular doodles exhibit two types of singular features:
\begin{enumerate}
    \item \textbf{Transverse double points}, corresponding to intersections of edges of the immersed graph, and
    \item \textbf{$4$-valent vertices}, representing the vertices of the underlying graph.
\end{enumerate}

\begin{figure}[h]
\centering
\includegraphics[width=4cm,height=4cm]{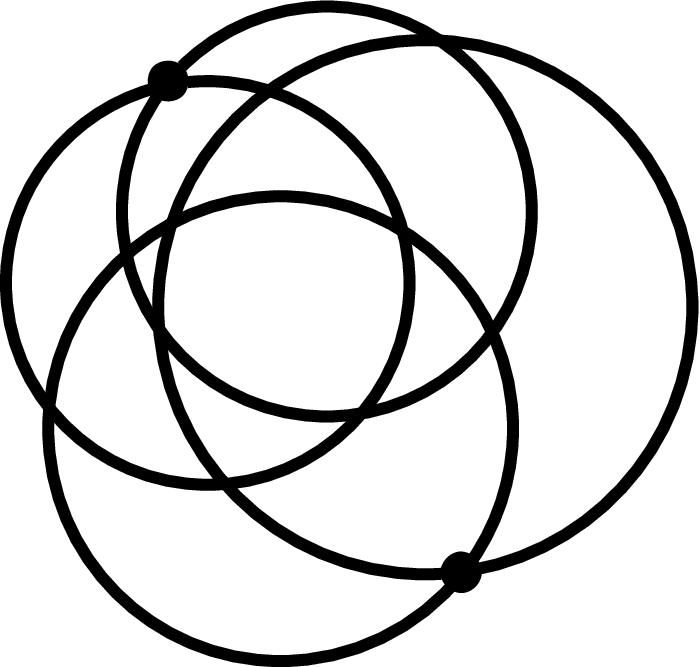}
\caption{A singular doodle obtained   as an  embedding of a 4-valent graph. The underlying graph is a disjoint union of 2 two-bouquet graphs.}
\label{figure5}
\end{figure}

Two singular doodles are said to be equivalent if one can be transformed into the other by a finite sequence of local moves of type $D_{1}$ and $D_{2}$ (Figure~\ref{figure4}), together with moves $D_{3}$ and $D_{4}$ (Figure~\ref{figure6}), which extend the classical doodle moves to configurations involving vertices.

\begin{figure}[h]
\centering
\includegraphics[width=5cm,height=2cm]{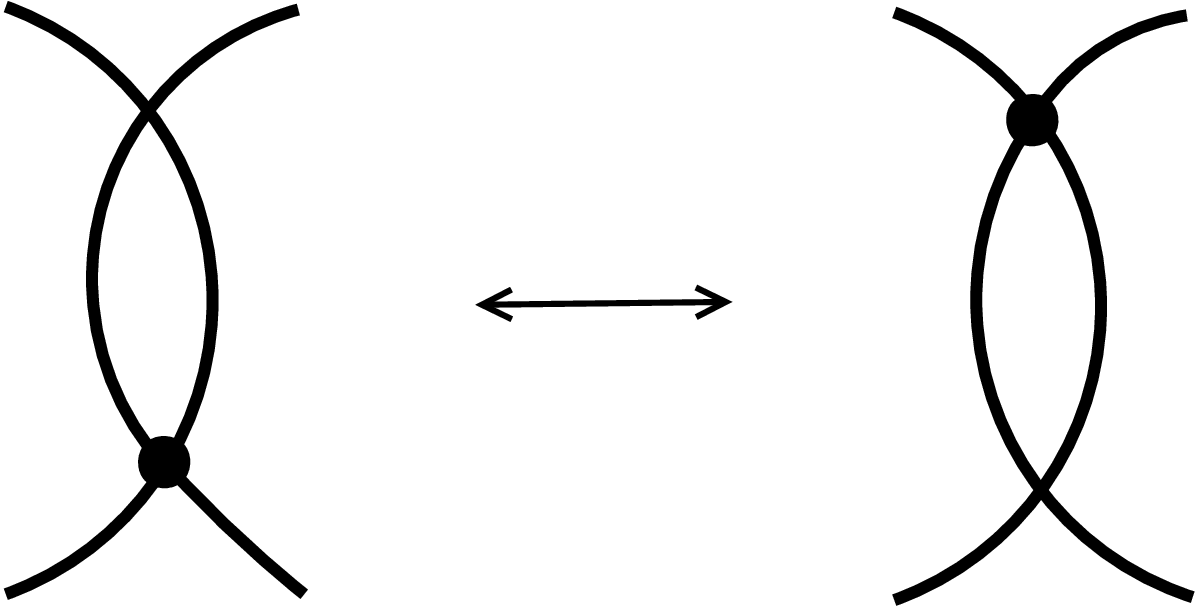}\hspace{1cm}
\includegraphics[width=6cm,height=2cm]{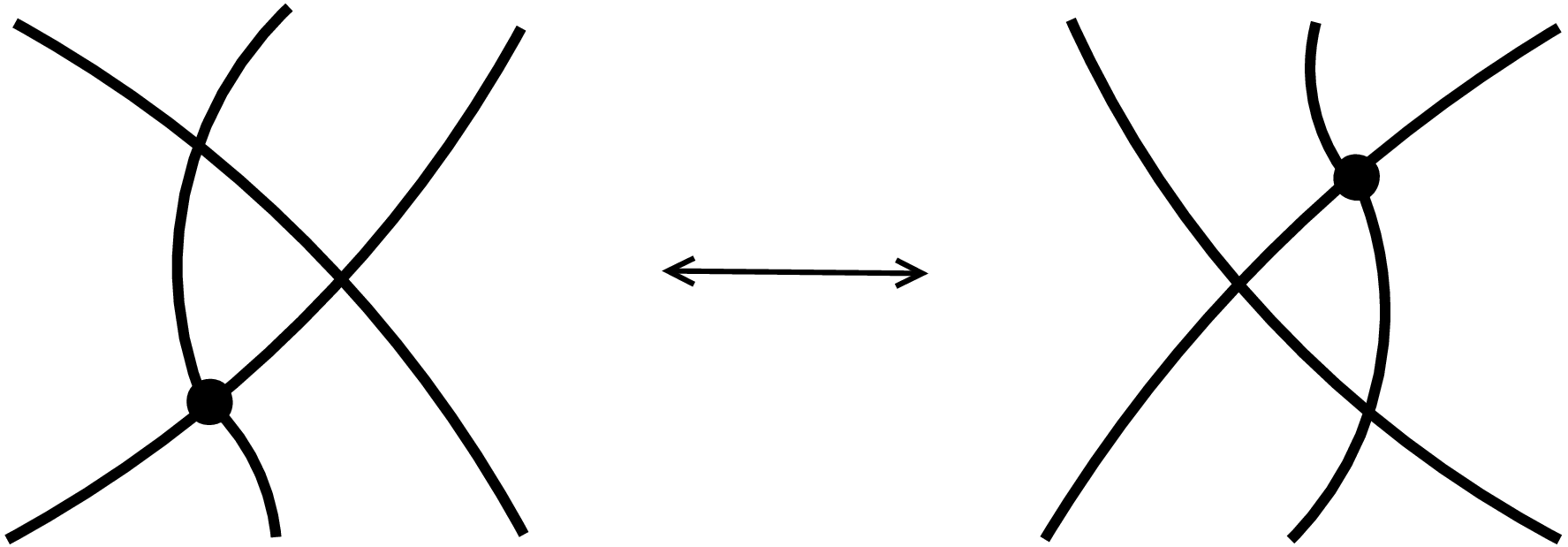}
\caption{The local moves $D_{3}$ (left) and $D_{4}$ (right).}
\label{figure6}
\end{figure}

\vspace*{0.1cm}

Now, after introducing the singular twin group from both algebraic and topological perspectives, we aim to construct representations of this group.  

\begin{question}
What are the possible complex homogeneous $k$-local representations of $ST_n$?
\end{question}

We answer this question in the following sections for $k=2$.

\section{Classification of homogeneous $2$-Local representations of $ST_n$} 

In this section we classify all complex homogeneous $2$-local representations of the singular twin group $ST_n$ for all $n\geq 2$. First of all, we consider the case $n=2$, which is a special case.

\vspace*{0.1cm}

\begin{theorem} \label{n2}
Let $\Gamma: ST_2 \rightarrow \mathrm{GL}_2(\mathbb{C})$ be a complex homogeneous $2$-local representation of $ST_2$. Then, $\Gamma$ is equivalent to one of the following six representations.
\begin{itemize}
\item[(1)] $\Gamma_1: ST_2 \rightarrow \mathrm{GL}_2(\mathbb{C})$ such that $$\Gamma_1(s_1)=\left(
\begin{array}{cc}
 -\sqrt{1-b c} & b \\
 c & \sqrt{1-b c} \\
\end{array}
\right)
\ \text{and} \ \Gamma_1(\tau_1)= \left(
\begin{array}{cc}
 w & x \\
 \frac{c x}{b} & \frac{b w+2 x \sqrt{1-b c}}{b} \\
\end{array}
\right),$$ where $b,c,w,x \in \mathbb{C}, b\neq 0.$ \vspace*{0.1cm}
\item[(2)] $\Gamma_2: ST_2 \rightarrow \mathrm{GL}_2(\mathbb{C})$ such that $$\Gamma_2(s_1)=\left(
\begin{array}{cc}
 \sqrt{1-b c} & b \\
 c & -\sqrt{1-b c} \\
\end{array}
\right)
\ \text{and} \ \Gamma_2(\tau_1)=\left(
\begin{array}{cc}
 w & x \\
 \frac{c x}{b} & \frac{b w-2 x \sqrt{1-b c}}{b} \\
\end{array}
\right),$$ where $b,c,w,x \in \mathbb{C}, b\neq 0.$ \vspace*{0.1cm}
\item[(3)] $\Gamma_3: ST_2 \rightarrow \mathrm{GL}_2(\mathbb{C})$ such that $$\Gamma_3(s_1)=\left(
\begin{array}{cc}
 -1 & 0 \\
 c & 1 \\
\end{array}
\right)
\ \text{and} \ \Gamma_3(\tau_1)=\left(
\begin{array}{cc}
 w & 0 \\
 y & \frac{c w+2 y}{c} \\
\end{array}
\right),$$ where $c,w,y \in \mathbb{C}, c\neq 0.$ \vspace*{0.1cm}
\item[(4)] $\Gamma_4: ST_2 \rightarrow \mathrm{GL}_2(\mathbb{C})$ such that $$\Gamma_4(s_1)=\left(
\begin{array}{cc}
 1 & 0 \\
 c & -1 \\
\end{array}
\right)
\ \text{and} \ \Gamma_4(\tau_1)=\left(
\begin{array}{cc}
 w & 0 \\
 y & \frac{c w-2 y}{c} \\
\end{array}
\right),$$ where $c,w,y \in \mathbb{C}, c\neq 0.$ \vspace*{0.1cm}
\item[(5)] $\Gamma_5: ST_2 \rightarrow \mathrm{GL}_2(\mathbb{C})$ such that $$\Gamma_5(s_1)=\left(
\begin{array}{cc}
 -1 & 0 \\
 0 & 1 \\
\end{array}
\right)
\ \text{and} \ \Gamma_5(\tau_1)=\left(
\begin{array}{cc}
 w & 0 \\
 0 & z \\
\end{array}
\right),$$ where $w,z \in \mathbb{C}.$ \vspace*{0.1cm}
\item[(6)] $\Gamma_6: ST_2 \rightarrow \mathrm{GL}_2(\mathbb{C})$ such that $$\Gamma_6(s_1)=\left(
\begin{array}{cc}
 1 & 0 \\
 0 & 1 \\
\end{array}
\right)
\ \text{and} \ \Gamma_6(\tau_1)=\left(
\begin{array}{cc}
 w & x \\
 y & z \\
\end{array}
\right),$$ where $w,x,y,z \in \mathbb{C}.$ \vspace*{0.1cm}
\end{itemize}
\end{theorem}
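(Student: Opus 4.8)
The plan is to exploit that for $n=2$ the group $ST_2$ has an extremely small presentation. Indeed, the only surviving defining relations are $s_1^2=1$ (relation (\ref{eqs3})) and $s_1\tau_1=\tau_1 s_1$ (relation (\ref{eqs15})); the braid-type relations (\ref{eqs16}) and (\ref{eqs17}) are vacuous for $n=2$, while (\ref{eqs13}) and (\ref{eqs14}) involve no indices at distance $\geq 2$. Moreover, homogeneity is automatic since there is a single generator of each type. Writing $S:=\Gamma(s_1)$ and $T:=\Gamma(\tau_1)$, a homogeneous $2$-local representation of $ST_2$ is therefore \emph{exactly} a pair $S,T\in\mathrm{GL}_2(\mathbb{C})$ with $S^2=I$ and $ST=TS$, and two such pairs give equivalent representations precisely when they are simultaneously conjugate. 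Thus the whole theorem reduces to classifying commuting pairs $(S,T)$ with $S$ an involution, up to simultaneous conjugation.

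First I would determine the possible $S$. By Cayley--Hamilton, $S^2=(\operatorname{tr}S)\,S-(\det S)\,I$, so $S^2=I$ is equivalent to $(\operatorname{tr}S)\,S=(1+\det S)\,I$. This cleanly splits the analysis. If $\operatorname{tr}S\neq 0$ then $S$ must be scalar, whence $S=\pm I$; in this central case $ST=TS$ is automatic and $T$ is an arbitrary invertible matrix, the choice $S=I$ being family (6). If instead $\operatorname{tr}S=0$, the identity forces $\det S=-1$, so $S$ is a non-central involution with eigenvalues $\pm1$, which I record as $S=\begin{pmatrix} a & b\\ c & -a\end{pmatrix}$ subject to the single equation $a^2+bc=1$.

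Next I would solve $ST=TS$ for $T=\begin{pmatrix} w & x\\ y & z\end{pmatrix}$ in this traceless case. A direct entrywise comparison reduces the commutation to the two linear conditions $by=cx$ and $bw-bz=2ax$ (equivalently $cw-cz=2ay$), and the shape of the solution depends only on which of $b,c$ vanish. The short sub-case analysis then reads: for $b\neq 0$ one sets $a=\pm\sqrt{1-bc}$ and solves $y=cx/b$, $z=w-2ax/b$, producing families (1) and (2) according to the sign; for $b=0,\ c\neq 0$ the first condition forces $x=0$ and $a^2=1$, producing families (3) and (4); and for $b=c=0$ the matrix $S$ is diagonal with diagonal $\pm1$, $T$ is forced diagonal, and a single transposition conjugation normalizes $S$ to $\mathrm{diag}(-1,1)$, producing family (5). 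I would close the argument by verifying the converse directly: each of the six displayed pairs satisfies $S^2=I$ and $ST=TS$ with $T$ invertible for the stated parameter ranges, so all six are genuine representations.

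The individual computations here are routine linear algebra; the real work is the \emph{exhaustiveness} of the traceless case. The hard part will be the bookkeeping: checking that the partition into $b\neq0$, $(b=0,\,c\neq0)$, and $b=c=0$ is complete, that both branches of $\sqrt{1-bc}$ are recorded, and that the passage to $\mathrm{diag}(-1,1)$ is the only point at which a nontrivial conjugation is invoked. I also expect some care to be needed in tracking the nonvanishing constraints $b\neq0$ in families (1),(2) and $c\neq0$ in families (3),(4), which are exactly the conditions separating them from the degenerate diagonal family (5) and which guarantee the stated parametrizations do not secretly collapse into one another.
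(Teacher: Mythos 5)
Your strategy is essentially the paper's own: both proofs write $S=\Gamma(s_1)$, $T=\Gamma(\tau_1)$ as general invertible matrices, note that the only relations surviving for $n=2$ are $S^2=I$ and $ST=TS$, and solve the resulting entrywise equations by cases. Your Cayley--Hamilton reduction to ``$S=\pm I$, or $\operatorname{tr}S=0$ and $\det S=-1$'' is merely a tidier packaging of the paper's equations \eqref{eq15}--\eqref{eq18}, and your traceless analysis (split into $b\neq 0$; $b=0,c\neq 0$; $b=c=0$) is complete, correct, and reproduces families (1)--(5) exactly, including the single transposition conjugation that normalizes $\mathrm{diag}(1,-1)$ to $\mathrm{diag}(-1,1)$.

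There is, however, one genuine gap, and it sits precisely at the point you did not resolve: the central case. You observe that $\operatorname{tr}S\neq 0$ forces $S=\pm I$, but you assign only $S=I$ to a family (``the choice $S=I$ being family (6)''), leaving $S=-I$ with $T$ arbitrary invertible unaccounted for. This case cannot be absorbed under the notion of equivalence you yourself adopt (simultaneous conjugation): $-I$ is central, hence fixed by every conjugation, and none of $\Gamma_1,\dots,\Gamma_6$ sends $s_1$ to $-I$ (families (1)--(2) have $b\neq 0$, families (3)--(4) have $c\neq 0$, family (5) gives $\mathrm{diag}(-1,1)$, family (6) gives $I$). Thus, for instance, $s_1\mapsto -I$, $\tau_1\mapsto\begin{pmatrix}1&1\\0&1\end{pmatrix}$ is a homogeneous $2$-local representation of $ST_2$ equivalent to none of the six, so your argument cannot be closed as written. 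For what it is worth, the paper's proof meets the same case (its subcase $b=0$, $a=d=-1$, where equation \eqref{eq17} forces $c=0$) and simply declares it ``equivalent to $\Gamma_6$''; that identification is tenable only under a weaker equivalence than simultaneous conjugacy, namely allowing a twist by the sign character $s_1\mapsto -1$, $\tau_1\mapsto 1$. To complete your proof you must either state and use such a weaker equivalence explicitly, or add $s_1\mapsto -I$ (with $T$ arbitrary) as a further family; a careful write-up should also record the standing requirement $\det T\neq 0$ on the parameters in each family.
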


\begin{proof}
Set $$\Gamma(s_1)=\left(
\begin{array}{cc}
 a & b \\
 c & d \\
\end{array}
\right)
\ \text{and} \ \Gamma(\tau_1)=\left(
\begin{array}{cc}
 w & x \\
 y & z \\
\end{array}
\right),$$ where $a,b,c,d,w,x,y,z \in \mathbb{C}$ such that $ad-bc\neq 0$ and $wz-xy\neq 0$. The defining relations of the group $ST_2$ are $s_1^2=1$ and $s_1\tau_1=\tau_1s_1$. Consequently, we obtain $\Gamma(s_1)^2=1$ and $\Gamma(s_1)\Gamma(\tau_1)=\Gamma(\tau_1)\Gamma(s_1)$. Using these two relations, we derive the following seven equations.
\begin{equation} \label{eq15}
a^2+bc=1
\end{equation}
\begin{equation}\label{eq16}
ab+bd=0
\end{equation}
\begin{equation}\label{eq17}
ac+cd=0
\end{equation}
\begin{equation}\label{eq18}
bc+d^2=1
\end{equation}
\begin{equation}\label{eq19}
-cx+by=0
\end{equation}
\begin{equation}\label{eq20}
bw-ax+dx-bz=0
\end{equation}
\begin{equation}\label{eq21}
cw-ay+dy-cz=0
\end{equation}
We consider the following two cases.
\begin{itemize}
\item[(1)] The case $b=0$. From Equations (\ref{eq15}) and (\ref{eq18}), we get that $a^2=d^2=1$, and so $a=\pm 1$ and $d=\pm 1$. We consider each subcase separately.
\begin{itemize}
\item[•] If $a=d=1$, then $c=0$ by Equation (\ref{eq17}) and so $\Gamma$ is equivalent to $\Gamma_6$ in this case. 
\item[•] If $a=d=-1$, then $c=0$ by Equation (\ref{eq17}) and so $\Gamma$ is equivalent to $\Gamma_6$ in this case. 
\item[•] If $a=1$ and $d=-1$, then $x=0$ by Equation (\ref{eq20}) and so we have $\Gamma$ is equivalent to $\Gamma_5$ if $c=0$ and $\Gamma$ is equivalent to $\Gamma_4$ if $c\neq 0$.
\item[•] If $a=-1$ and $d=1$, then $x=0$ by Equation (\ref{eq20}) and so we have $\Gamma$ is equivalent to $\Gamma_5$ if $c=0$ and $\Gamma$ is equivalent to $\Gamma_3$ if $c\neq 0$.
\end{itemize}
\item[(2)] The case $b \neq 0$. From Equations (\ref{eq15}) and (\ref{eq18}), we get that $a^2=d^2=1-bc$ and, by Equation (\ref{eq16}), we get that $a=-d$. So, $a=\pm \sqrt{1-bc}$ and $d=\mp \sqrt{1-bc}$. We consider each subcase separately.
\begin{itemize}
\item[•] If $a=-\sqrt{1-bc}$ and $d=\sqrt{1-bc}$, then, using Equations (\ref{eq20}) and (\ref{eq21}), we get that $\Gamma$ is equivalent to $\Gamma_1$. 
\item[•] If $a=\sqrt{1-bc}$ and $d=-\sqrt{1-bc}$, then, using Equations (\ref{eq20}) and (\ref{eq21}), we get that $\Gamma$ is equivalent to $\Gamma_2$. 
\end{itemize}
\end{itemize}
\end{proof}

We now consider the case $n\geq 3$ and we classify all complex homogeneous $2$-local representations of $ST_n$, for all $n\geq 3$. 

\begin{theorem} \label{Theo}
Consider $n\geq 3$ and let $\Theta: ST_n \rightarrow \mathrm{GL}_n(\mathbb{C})$ be a complex homogeneous $2$-local representation of $ST_n$. Then, $\Theta$ is equivalent to one of the following five representations.
\begin{itemize}
\item[(1)] $\Theta_1: ST_n \rightarrow \mathrm{GL}_n(\mathbb{C})$ such that
$$\Theta_1(s_i)=\left( \begin{array}{c|@{}c|c@{}}
   \begin{matrix}
     I_{i-1} 
   \end{matrix} 
      & 0 & 0 \\
      \hline
    0 &\hspace{0.2cm} \begin{matrix}
  0 & b \\
 \frac{1}{b} & 0 \\
   		\end{matrix}  & 0  \\
\hline
0 & 0 & I_{n-i-1}
\end{array} \right)
 \ \text{and } \ \Theta_1(\tau_i) =\left( \begin{array}{c|@{}c|c@{}}
   \begin{matrix}
     I_{i-1} 
   \end{matrix} 
      & 0 & 0 \\
      \hline
    0 &\hspace{0.2cm} \begin{matrix}
  w & x \\
 \frac{x}{b^2} & w \\
   		\end{matrix}  & 0  \\
\hline
0 & 0 & I_{n-i-1}
\end{array} \right),$$ where $b,w,x\in \mathbb{C},b\neq 0$, $1\leq i\leq n-1.$\vspace*{0.1cm}
\item[(2)] $\Theta_2: ST_n \rightarrow \mathrm{GL}_n(\mathbb{C})$ such that
$$\Theta_2(s_i)=\left( \begin{array}{c|@{}c|c@{}}
   \begin{matrix}
     I_{i-1} 
   \end{matrix} 
      & 0 & 0 \\
      \hline
    0 &\hspace{0.2cm} \begin{matrix}
 -\sqrt{1-b c} & b \\
 c & \sqrt{1-b c} \\
   		\end{matrix}  & 0  \\
\hline
0 & 0 & I_{n-i-1}
\end{array} \right)
 \ \text{and } \ \Theta_2(\tau_i)=I_n,$$ where $b,c \in \mathbb{C}$, $1\leq i\leq n-1.$\vspace*{0.1cm}
\item[(3)] $\Theta_3: ST_n \rightarrow \mathrm{GL}_n(\mathbb{C})$ such that
$$\Theta_3(s_i)=\left( \begin{array}{c|@{}c|c@{}}
   \begin{matrix}
     I_{i-1} 
   \end{matrix} 
      & 0 & 0 \\
      \hline
    0 &\hspace{0.2cm} \begin{matrix}
 \sqrt{1-b c} & b \\
 c & -\sqrt{1-b c} \\
   		\end{matrix}  & 0  \\
\hline
0 & 0 & I_{n-i-1}
\end{array} \right)
 \ \text{and } \ \Theta_3(\tau_i) =I_n,$$ where $b,c \in \mathbb{C}$, $1\leq i\leq n-1.$\vspace*{0.1cm}
\item[(4)] $\Theta_4: ST_n \rightarrow \mathrm{GL}_n(\mathbb{C})$ such that
$$\Theta_4(s_i)=\left( \begin{array}{c|@{}c|c@{}}
   \begin{matrix}
     I_{i-1} 
   \end{matrix} 
      & 0 & 0 \\
      \hline
    0 &\hspace{0.2cm} \begin{matrix}
 -1 & 0 \\
 0 & -1 \\
   		\end{matrix}  & 0  \\
\hline
0 & 0 & I_{n-i-1}
\end{array} \right)
 \ \text{and } \ \Theta_4(\tau_i) =I_n,$$ where $1\leq i\leq n-1.$\vspace*{0.1cm}
\item[(5)] $\Theta_5: ST_n \rightarrow \mathrm{GL}_n(\mathbb{C})$ such that
$$\Theta_5(s_i)=I_n
 \ \text{and } \ \Theta_5(\tau_i) =I_n,$$ where $1\leq i\leq n-1.$\vspace*{0.1cm}

\end{itemize}
\end{theorem}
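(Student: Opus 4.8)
The plan is to reduce all defining relations of $ST_n$ to matrix conditions on the two $2\times 2$ blocks that a homogeneous $2$-local representation attaches to the generators, exactly in the spirit of the proof of Theorem \ref{n2}. First I would set $\Theta(s_i)$ to have central block $M=\begin{pmatrix} a & b \\ c & d\end{pmatrix}$ and $\Theta(\tau_i)$ to have central block $N=\begin{pmatrix} w & x \\ y & z\end{pmatrix}$ with $\det M\neq 0$ and $\det N\neq 0$. Because the representation is $2$-local and homogeneous, the blocks of non-adjacent generators sit in disjoint coordinate pairs, so relations (\ref{eqs4}), (\ref{eqs13}) and (\ref{eqs14}) hold automatically. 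The relation $s_i^2=1$ becomes $M^2=I_2$ and relation (\ref{eqs15}) becomes $MN=NM$; together these are just the equations (\ref{eq15})--(\ref{eq21}) already extracted in Theorem \ref{n2}. Since $n\geq 3$, we additionally have the genuinely three-strand relations (\ref{eqs16}) and (\ref{eqs17}) among $s_1,s_2,\tau_1,\tau_2$, which do not appear when $n=2$; these carry the new content.

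A clean preliminary observation I would record is that, once $M^2=I_2$ is imposed, relations (\ref{eqs16}) and (\ref{eqs17}) become \emph{equivalent}: conjugating (\ref{eqs16}) and using $\Theta(s_i)^{-1}=\Theta(s_i)$ rewrites it as $\Theta(\tau_{i+1})=\Theta(s_i)\Theta(s_{i+1})\Theta(\tau_i)\Theta(s_{i+1})\Theta(s_i)$, which is precisely the statement of (\ref{eqs17}). Hence it suffices to impose the single three-strand relation $T_{i+1}=W\,T_i\,W^{-1}$, where $W=\Theta(s_i)\Theta(s_{i+1})$ and $T_i,T_{i+1}$ are the $3\times 3$ restrictions to the coordinates $i,i+1,i+2$. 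I would carry this out as the entrywise identity $W T_i = T_{i+1} W$. Computing $W$ and expanding this $3\times 3$ equation is the laborious computational heart of the argument, and the step I expect to be the main obstacle.

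Next I would split on the shape of $M$. As $M^2=I_2$ forces $M$ to be diagonalizable with eigenvalues in $\{\pm 1\}$, either $M=\pm I_2$, or $M$ is a reflection with $\operatorname{tr}M=0$, i.e. $d=-a$ and $a^2+bc=1$. In the scalar cases $M=\pm I_2$ the three-strand relation collapses to $T_i=T_{i+1}$ as $3\times 3$ matrices, which immediately forces $w=z=1$ and $x=y=0$, so $N=I_2$; this yields $\Theta_5$ when $M=I_2$ and $\Theta_4$ when $M=-I_2$. In the reflection case, writing $M=\begin{pmatrix} a & b \\ c & -a\end{pmatrix}$, the commutation $MN=NM$ reduces to $by=cx$, $b(w-z)=2ax$ and $c(w-z)=2ay$, while the expanded relation $WT_i=T_{i+1}W$ produces equations each carrying an overall factor of $a$, for instance $a(w+by-1)=0$, $a^2y=0$, $a^2(w-z)=0$ and $a(x+bz-b)=0$. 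Recognizing this common factor of $a$ is exactly what makes the final case split transparent.

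The decisive dichotomy is therefore whether $a=0$. If $a\neq 0$, the three-strand equations force $y=0$, then $w=z$, then $w=z=1$, and finally $x=0$, so $N=I_2$; combined with $a=\pm\sqrt{1-bc}$ this produces $\Theta_2$ and $\Theta_3$. If $a=0$, then $bc=1$, so $M$ is the anti-diagonal matrix $\begin{pmatrix} 0 & b \\ 1/b & 0\end{pmatrix}$ and every three-strand equation holds identically; the only surviving constraints are the commutation relations, which collapse to $w=z$ and $y=x/b^2$, giving exactly $\Theta_1$. Assembling the four cases yields the five families $\Theta_1,\dots,\Theta_5$. The hard part is purely the bookkeeping of the $3\times 3$ expansion in the second step; once those equations are in their factored form, the remaining case analysis is routine.
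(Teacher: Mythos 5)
Your proposal is correct, and it is actually more complete than the paper's own proof, which consists of the single sentence that the argument ``follows in a similar manner to that of Theorem \ref{n2}.'' The strategy is the same in spirit --- translate each defining relation into equations on the $2\times 2$ blocks $M,N$ and run a case analysis on the involution $M$ --- but the genuinely new content for $n\geq 3$, which the paper leaves entirely implicit, is exactly what you supply: the mixed relations (\ref{eqs16}) and (\ref{eqs17}) have no counterpart when $n=2$, and they are what cut the six families of Theorem \ref{n2} down to the five families $\Theta_1,\dots,\Theta_5$. Your two structural observations check out by direct computation: (i) since $\Theta(s_j)^2=I_n$, relations (\ref{eqs16}) and (\ref{eqs17}) both amount to $\Theta(\tau_{i+1})=W\Theta(\tau_i)W^{-1}$ with $W=\Theta(s_i)\Theta(s_{i+1})$, so only one of them needs to be imposed; and (ii) in the reflection case $M=\begin{pmatrix} a & b\\ c & -a\end{pmatrix}$ with $a^2+bc=1$, the entrywise expansion of $WT_i=T_{i+1}W$ on coordinates $i,i+1,i+2$ yields precisely the equations $a(w+by-1)=0$, $a(x+bz-b)=0$, $a^2y=0$, $a^2(w-z)=0$, $a(bw+x-b)=0$, $a(by+z-1)=0$, all carrying the factor $a$, so your dichotomy is the right one: $a\neq 0$ forces $N=I_2$ (giving $\Theta_2$, $\Theta_3$, with $\Theta_4$, $\Theta_5$ coming from $M=\mp I_2$), while $a=0$ forces $bc=1$, kills every three-strand equation, and leaves only the commutation constraints $w=z$, $y=x/b^2$ of relation (\ref{eqs15}), giving $\Theta_1$. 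One harmless slip: for $M=-I_2$ the three-strand relation is not literally $T_i=T_{i+1}$ but conjugation by $W=\mathrm{diag}(-1,1,-1)$; since such a conjugation only flips the signs of the off-diagonal entries of the block, the conclusion $N=I_2$ is unchanged.
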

\begin{proof}
The proof follows in a similar manner to that of Theorem \ref{n2}.
\end{proof}

\vspace*{0.1cm}

\section{Irreducibility of the homogeneous $2$-local representations of $ST_n$} 

In this section, we study the irreducibility of the complex homogeneous $2$-local representations of the singular twin group $ST_n$ for all $n\geq 2$. We start by the case $n=2$, which is a special case.

\begin{theorem}
Let $\Gamma: ST_2 \rightarrow \mathrm{GL}_2(\mathbb{C})$ denote a complex homogeneous $2$-local representation of $ST_2$. Then, $\Gamma$ is reducible. 
\end{theorem}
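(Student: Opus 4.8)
The plan is to exploit the group-theoretic structure of $ST_2$ rather than to argue case by case. Reading off the relevant defining relations for $n=2$, namely $s_1^2=1$ from (\ref{eqs3}) and $\tau_1 s_1 = s_1\tau_1$ from (\ref{eqs15}), together with the invertibility of $\tau_1$, we see that $ST_2=\langle s_1,\tau_1 \mid s_1^2=1,\ s_1\tau_1=\tau_1 s_1\rangle$ is abelian (indeed $ST_2\cong \mathbb{Z}/2\times\mathbb{Z}$). Consequently, for any homogeneous $2$-local representation $\Gamma$ the two matrices $\Gamma(s_1)$ and $\Gamma(\tau_1)$ must commute. This single observation is the crux of the argument.

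I would then invoke the standard fact that two commuting operators on a finite-dimensional complex vector space always admit a common eigenvector. The one-line justification to record: pick an eigenvalue $\lambda$ of $\Gamma(s_1)$, note that its (nonzero) eigenspace $V_\lambda\subseteq\mathbb{C}^2$ is invariant under $\Gamma(\tau_1)$ because the two matrices commute, and then choose an eigenvector of $\Gamma(\tau_1)|_{V_\lambda}$, which is a simultaneous eigenvector of both images. The line spanned by such a vector is a one-dimensional subspace of $\mathbb{C}^2$ that is invariant under $\Gamma(ST_2)$, hence a nontrivial proper subrepresentation. Therefore $\Gamma$ is reducible.

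As a sanity check, and since reducibility is invariant under equivalence, one can verify the conclusion directly against the six normal forms of Theorem~\ref{n2}. For $\Gamma_6$ we have $\Gamma_6(s_1)=I_2$, so any eigenvector of $\Gamma_6(\tau_1)$ spans an invariant line; $\Gamma_5$ is simultaneously diagonal; and $\Gamma_3,\Gamma_4$ are both lower triangular, so the coordinate line $\langle e_2\rangle$ is invariant under both images. The only forms requiring a genuine computation are $\Gamma_1$ and $\Gamma_2$, whose images are full $2\times 2$ matrices; producing the common eigenvector there explicitly is the most involved part of a brute-force approach, which is precisely why the abelian observation is worth making first, as it dispatches all six cases uniformly.

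I expect essentially no obstacle with the structural route: the only point needing care is confirming that $ST_2$ really is abelian (so that the commuting-matrices lemma applies) and that a one-dimensional invariant subspace counts as a nontrivial subrepresentation in the $2$-dimensional setting, both of which are immediate. The brute-force verification of $\Gamma_1,\Gamma_2$ would be the genuinely computational step if one avoided the abelian argument.
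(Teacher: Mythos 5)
Your proof is correct, but it takes a genuinely different route from the paper. The paper proves reducibility by invoking its classification theorem (Theorem~\ref{n2}) and then exhibiting an explicit invariant line for each of the six normal forms $\Gamma_1,\dots,\Gamma_6$ — e.g.\ the vector $\left(-\frac{\sqrt{1-bc}+1}{c},1\right)$ for $\Gamma_1,\Gamma_2$ when $c\neq 0$, and $e_2$ for $\Gamma_3,\Gamma_4,\Gamma_5$. You instead observe that the only relations of $ST_2$ are $s_1^2=1$ and $s_1\tau_1=\tau_1 s_1$ (a fact the paper itself states inside the proof of Theorem~\ref{n2}, despite the off-by-one index range in relation~(\ref{eqs3})), so $ST_2\cong\mathbb{Z}/2\times\mathbb{Z}$ is abelian; commutativity of $\Gamma(s_1)$ and $\Gamma(\tau_1)$ plus the standard common-eigenvector lemma for commuting operators over $\mathbb{C}$ then yields an invariant line at once. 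Your argument is shorter, does not depend on the correctness of the classification, and in fact proves more: \emph{every} complex representation of $ST_2$ of dimension at least $2$ is reducible, not merely the homogeneous $2$-local ones — indeed only the commutation relation~(\ref{eqs15}) is used, not $s_1^2=1$. What the paper's case-by-case method buys in exchange is the explicit common eigenvectors, which identify the actual invariant subspaces and composition factors of each normal form — information that is in the spirit of the finer irreducibility analysis carried out for $n\geq 3$ in Theorem~\ref{Thirrr}, but which is not needed for the bare statement of reducibility.
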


\begin{proof}
Theorem \ref{n2} yields that $\Gamma$ is equivalent to one of the six representations $\Gamma_j, 1\leq j \leq 6$. We consider each case separately.
\begin{itemize}
\item[(1)] If $\Gamma$ is equivalent to $\Gamma_1$ or $\Gamma_2$, then we have the following two subcases.
\item[•] In the case $c=0$, $e_1$ is a common eigenvector of both $\Gamma(s_1)$ and $\Gamma(\tau_1)$, and hence $\Gamma$ is reducible.
\item[•] In the case $c\neq 0$, $\left(-\frac{\sqrt{1-b c}+1}{c},1\right)$ is a common eigenvector of both $\Gamma(s_1)$ and $\Gamma(\tau_1)$, and hence $\Gamma$ is reducible.
\item[(2)] If $\Gamma$ is equivalent to $\Gamma_3$, $\Gamma_4$ or $\Gamma_5$, then $e_2$ is a common eigenvector of both $\Gamma(s_1)$ and $\Gamma(\tau_1)$, and hence $\Gamma$ is reducible.
\item[(3)] If $\Gamma$ is equivalent to $\Gamma_6$, then every eigenvector of $\Gamma(\tau_1)$ is invariant under $\Gamma(s_1)$, and hence $\Gamma$ is reducible.
\end{itemize}
\end{proof}

\begin{theorem} \label{Thirrr}
Consider $n\geq 3$ and let $\Theta: ST_n \rightarrow \mathrm{GL}_n(\mathbb{C})$ denote a complex homogeneous $2$-local representation of $ST_n$. By Theorem \ref{Theo}, $\Theta$ is equivalent to one of the five representations $\Theta_i, 1\leq i \leq 5.$ The following hold true.
\begin{itemize}
\item[(1)] If $\Theta$ is equivalent to $\Theta_1$, then $\Theta$ is irreducible if and only if $w+\frac{x}{b}\neq 1$.
\item[(2)] If $\Theta$ is equivalent to $\Theta_2$, then $\Theta$ is reducible to the degree $n-1$. Furthermore, by putting $a = -\sqrt{1-bc}$, we have the following cases.
\begin{itemize}
\item[•] If $n=3$, then the $(n-1)$-composition factor, namely $\Theta'$, of $\Theta$ is irreducible if and only if $a \notin \{\pm 1,\pm i\sqrt{3}\}$.
\item[•] If $n\geq 4$, then the $(n-1)$-composition factor, namely $\Theta'$, of $\Theta$ is irreducible if and only if $a \notin \{\pm 1\}$ and $a$ is not a root of $$P(t) = 4(1+t^2) + \frac{(1-t)^4}{2t} \left( 1 - \left( \frac{1-t}{1+t} \right)^{n-4} \right).$$
\end{itemize} 
\item[(3)] If $\Theta$ is equivalent to $\Theta_3$, then $\Theta$ is reducible to the degree $n-1$. Furthermore, by putting $a = \sqrt{1-bc}$, we have the following cases.
\begin{itemize}
\item[•] If $n=3$, then the $(n-1)$-composition factor, namely $\Theta'$, of $\Theta$ is irreducible if and only if $a \notin \{\pm 1,\pm i\sqrt{3}\}$.
\item[•] If $n\geq 4$, then the $(n-1)$-composition factor, namely $\Theta'$, of $\Theta$ is irreducible if and only if $a \notin \{\pm 1\}$ and $a$ is not a root of $$P(t) = 4(1+t^2) + \frac{(1-t)^4}{2t} \left( 1 - \left( \frac{1-t}{1+t} \right)^{n-4} \right).$$
\end{itemize} 
\item[(4)] If $\Theta$ is equivalent to $\Theta_4$ or $\Theta_5$, then $\Theta$ is a direct sum of $1$-dimensional representations.
\end{itemize} 
\end{theorem}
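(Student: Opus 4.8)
The plan is to treat the five normal forms of Theorem~\ref{Theo} separately. The key simplification is that for $\Theta_2,\dots,\Theta_5$ the singular generators act trivially, $\tau_i\mapsto I_n$, so these representations factor through the quotient $ST_n\to T_n$ sending $\tau_i\mapsto 1$, and the question reduces to one about $2$-local representations of $T_n$; only $\Theta_1$ genuinely involves the $\tau_i$. For $\Theta_4$ and $\Theta_5$ every generator is scalar on each $2\times 2$ block, so each standard basis vector $e_j$ spans a common invariant line and $\Theta$ is a direct sum of one-dimensional representations, proving (4).

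For (1) I would use the block identity $\begin{pmatrix} w & x\\ x/b^2 & w\end{pmatrix}=w I_2+\tfrac{x}{b}\begin{pmatrix}0&b\\ 1/b&0\end{pmatrix}$. Comparing full $n\times n$ matrices gives $\Theta_1(\tau_i)-wI_n-\tfrac{x}{b}\Theta_1(s_i)=\delta\,(I_n-E_{ii}-E_{i+1,i+1})$ with $\delta=1-(w+x/b)$. If $w+x/b=1$ then $\delta=0$, so each $\Theta_1(\tau_i)$ lies in the subalgebra generated by $I_n$ and $\Theta_1(s_i)$; the image is that of the underlying $T_n$-representation, which fixes $v=(b^{n-1},\dots,b,1)$, and $\Theta_1$ is reducible. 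If $w+x/b\neq1$ then $\delta\neq0$, so each idempotent $E_{ii}+E_{i+1,i+1}$ lies in the enveloping algebra $A$; the products $(E_{ii}+E_{i+1,i+1})(E_{i+1,i+1}+E_{i+2,i+2})=E_{i+1,i+1}$ and suitable differences recover every $E_{jj}$, whence all diagonal matrices lie in $A$, and then $E_{ii}\Theta_1(s_i)E_{i+1,i+1}=bE_{i,i+1}$ produces every matrix unit. Thus $A=M_n(\mathbb{C})$ and $\Theta_1$ is irreducible by Burnside's theorem.

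For (2) (and (3), which is identical after $a\mapsto -a$) I would first record, for the $s_i$-action, the invariant line $L=\langle v\rangle$ with $v=(1,r,\dots,r^{n-1})$, $r=(1-a)/b$, fixed by every $\Theta(s_i)$, and the invariant hyperplane $H=\ker\phi$ with $\phi=(1,\rho,\dots,\rho^{n-1})$, $\rho=(1-a)/c$. Existence of $L$ gives reducibility to degree $n-1$, with $\Theta'=\Theta/L$ the $(n-1)$-composition factor. The dichotomy is controlled by the scalar $\phi(v)=\sum_{k=0}^{n-1}q^{k}=\tfrac{1+a}{2a}\bigl(1-q^{n}\bigr)$, where $q=\rho r=\tfrac{1-a}{1+a}$. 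Using $(1+a)^4-(1-a)^4=8a(1+a^2)$ together with $(1+a)^{4}(1+a)^{n-4}=(1+a)^n$, a short manipulation shows that for $a\notin\{0,\pm1\}$ one has $P(a)=0\iff q^{n}=1\iff \phi(v)=0$; for $n=3$ this gives $a=\pm i\sqrt3$, matching the stated values. When $P(a)=0$, i.e.\ $v\in H$, the quotient $H/L$ is a proper nonzero invariant subspace of $\Theta'$ (of dimension $n-2\geq1$), so $\Theta'$ is reducible; the degenerate values $a=\pm1$, where $b$ or $c$ vanishes and the blocks become triangular, produce coordinate invariant subspaces and are likewise reducible.

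It remains to prove the converse: if $a\neq\pm1$ and $P(a)\neq0$, then $\Theta'$ is irreducible. Here $\phi(v)\neq0$ forces $\mathbb{C}^n=L\oplus H$, so $\Theta'\cong\Theta|_H$, and I would prove this $(n-1)$-dimensional summand irreducible by computing the commutant of $\{\Theta(s_i)\}$ and showing it is spanned by the two projections onto $L$ and $H$; since a commutant isomorphic to $\mathbb{C}\times\mathbb{C}$ decomposes $\Theta=\mathbf{1}_L\oplus\Theta'$ into two non-isomorphic irreducibles, $\Theta'$ is irreducible. I expect this commutant (equivalently, invariant-subspace) computation to be the main obstacle: because the $2\times2$ blocks of consecutive $\Theta(s_i)$ overlap, the relations $X\Theta(s_i)=\Theta(s_i)X$ couple all $n$ coordinates, so the system must be solved by a transfer-matrix recursion along the strands, whose telescoping — with the boundary contributions of the first and last strands giving the $4(1+t^2)$ and $\tfrac{(1-t)^4}{2t}$ terms and the bulk giving the geometric factor $(\tfrac{1-t}{1+t})^{n-4}$ — is precisely what reproduces $P$ and locates the jump of the commutant above dimension two.
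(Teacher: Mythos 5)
Your arguments for cases (1) and (4) are complete and correct, and for (1) genuinely different from the paper's: instead of conjugating by $\mathrm{diag}(b^{1-n},\dots,b,1)$ and citing \cite[Theorem~30]{Nasiss}, you use the identity $\Theta_1(\tau_i)-wI_n-\frac{x}{b}\Theta_1(s_i)=\bigl(1-w-\frac{x}{b}\bigr)\bigl(I_n-E_{ii}-E_{i+1,i+1}\bigr)$ to place the idempotents $E_{ii}+E_{i+1,i+1}$, hence all matrix units, in the enveloping algebra when $w+\frac{x}{b}\neq 1$, and you exhibit the fixed vector $(b^{n-1},\dots,b,1)$ when $w+\frac{x}{b}=1$; both computations check out, so this is a self-contained replacement for the paper's citation. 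Likewise, your observation that $\Theta_2,\dots,\Theta_5$ kill the $\tau_i$ and factor through $T_n$ is exactly the paper's reduction, and your identity $P(a)=\frac{(1+a)^4}{2a}\,(1-q^n)$ with $q=\frac{1-a}{1+a}$, giving $P(a)=0\iff q^n=1\iff \phi(v)=0$, is correct and actually more informative than the paper, which never unpacks $P$ and simply quotes \cite[Theorem~5]{Mayasi20251}.

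The genuine gap is in cases (2) and (3), where you establish only one direction of the stated equivalence. You show that $a=\pm 1$ or $P(a)=0$ makes $\Theta'$ reducible, but the converse --- irreducibility of $\Theta'$ whenever $a\notin\{\pm 1\}$ and $P(a)\neq 0$ --- is exactly the substance of the theorem, and you leave it as a plan (``compute the commutant by a transfer-matrix recursion''), which you yourself flag as the main obstacle and never carry out; the paper's proof of this step consists of quoting the corresponding result for the $T_n$-representation $\xi_1$ from \cite[Theorem~5]{Mayasi20251}, so your proposal, as written, does not prove the statement. Worse, the principle you intend to finish with is false in this generality: without complete reducibility, a commutant isomorphic to $\mathbb{C}\times\mathbb{C}$ does not force the summands to be irreducible. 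For instance, let the free group on $g_1,g_2$ act on $\mathbb{C}^2\oplus\mathbb{C}$ by $g_1\mapsto\left(\begin{smallmatrix}1&1\\0&1\end{smallmatrix}\right)\oplus(3)$ and $g_2\mapsto\left(\begin{smallmatrix}2&0\\0&1\end{smallmatrix}\right)\oplus(3)$: the commutant is $\mathbb{C}\times\mathbb{C}$, yet the two-dimensional summand is reducible (indecomposable). Since $T_n$ is infinite, $\Theta|_H$ is not a priori semisimple, so a dimension count of the commutant cannot conclude; you must show directly that $H$ (equivalently $\Theta'$) contains no proper nonzero invariant subspace, and that invariant-subspace computation is precisely the missing content.
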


\begin{proof}
We examine each case individually in what follows, except for the proofs of (4) and (5), which are straightforward.
\begin{itemize}
\item[(1)] Suppose that $\Theta$ is equivalent to $\Theta_1$. Consider the diagonal matrix defined by $P=\text{diag}(b^{1-n},b^{2-n},\ldots, b,1)$, where $\text{diag}(r_1, r_2,\dots, r_n)$ is a diagonal $n\times n$ matrix with $r_{ii}=r_i$. Consider the equivalent representation $\hat{\Theta}$ of $\Theta$ given by: $\hat{\Theta}(s_i)=P^{-1}\Theta(s_i)P$ and $\hat{\Theta}(\tau_i)=P^{-1}\Theta(\tau_i)P$ for all $1\leq i \leq n-1$. Direct computations give that 
$$\hat{\Theta}(s_i) =\left( \begin{array}{c|@{}c|c@{}}
   \begin{matrix}
     I_{i-1} 
   \end{matrix} 
      & 0 & 0 \\
      \hline
    0 &\hspace{0.2cm} \begin{matrix}
   		0 & 1\\
   		1 & 0
   		\end{matrix}  & 0  \\
\hline
0 & 0 & I_{n-i-1}
\end{array} \right)$$
and
$$\hat{\Theta}(\tau_i) =\left( \begin{array}{c|@{}c|c@{}}
   \begin{matrix}
     I_{i-1} 
   \end{matrix} 
      & 0 & 0 \\
      \hline
    0 &\hspace{0.2cm} \begin{matrix}
   		 w & \frac{x}{b}\\
   		\frac{x}{b} & w
   		\end{matrix}  & 0  \\
\hline
0 & 0 & I_{n-i-1}
\end{array} \right),$$
$\text{where} \hspace*{0.15cm} w, b, x \in \mathbb{C},b \neq 0, \hspace*{0.15cm} \text{for} \hspace*{0.15cm} 1\leq i\leq n-1$. The representation $\hat{\Theta}$ has the same form as the representation $\rho_3$ obtained in \cite[Theorem~30]{Nasiss}. Referring to the results in that paper, we obtain that our representation $\hat{\Theta}$ is irreducible if and only if $w + \tfrac{x}{b} \neq 1$, and consequently the same holds for $\Theta$.
\item[(2)] Suppose that $\Theta$ is equivalent to $\Theta_2$ and set $a=-\sqrt{1-bc}$. The restriction of the representation $\Theta$ to $T_n$ in this case has the same form as the representation $\xi_1$ obtained in \cite[Theorem~5]{Mayasi20251}. Referring to the results in that paper, and since $\Theta(\tau_i)=I_n$ for all $1\leq i \leq n-1$, we obtain that our representation $\Theta$ is reducible to the degree $n-1$ and the following cases occur.
\begin{itemize}
\item[•] If $n=3$, then the $(n-1)$-composition factor, namely $\Theta'$, of $\Theta$ is irreducible if and only if $a \notin \{\pm 1,\pm i\sqrt{3}\}$.
\item[•] If $n\geq 4$, then the $(n-1)$-composition factor, namely $\Theta'$, of $\Theta$ is irreducible if and only if $a \notin \{\pm 1\}$ and $a$ is not a root of $$P(t) = 4(1+t^2) + \frac{(1-t)^4}{2t} \left( 1 - \left( \frac{1-t}{1+t} \right)^{n-4} \right).$$
\end{itemize}
\item[(3)] In the case $\Theta$ is equivalent to $\Theta_3$, the argument proceeds as in (2), this time taking $a=\sqrt{1-bc}$.
\end{itemize}
\end{proof}

\section{Future work} 
In this section, we provide  ideas that could be considered as future work.
\begin{itemize}
\item[(1)] One of the important questions that could be addressed for any constructed group is its linearity. A group is said to be linear if it admits a faithful representation. So, the first issue that could be considered for the future is to study the faithfulness of the classified representations.
\item[(2)] In addition to classifying and analyzing $k$-local representations of the singular twin group, we also encourage the construction of new non-local representations of this group and the investigation of their properties, such as irreducibility and faithfulness.
\item[(3)] Inspired by the relationship between the Burau representation of the braid group and the Alexander polynomial for knots, we propose a future study to investigate whether representations of the twin group and the singular twin group can be used to define analogous invariants for doodles and singular doodles.
\end{itemize}
\vspace{.5cm}
\subsection*{Acknowledgement}
The second   author was    supported by  United Arab Emirates University under  UPAR grant
 $\# G00005447.$
\vspace{0.2cm}


\end{document}